\newcommand{\ku}{k(\!(u)\!)}
\newcommand{\kubr}{k[\![u]\!]}
\newcommand{\upar}{(\!(u)\!)}
\newcommand{\id}{\text{id}}
\newcommand{\N}{\mathbf{N}}
\newcommand{\Z}{\mathbf{Z}}
\newcommand{\R}{\mathbf{R}}
\newcommand{\Q}{\mathbf{Q}}
\newcommand{\F}{\mathbf{F}}
\newcommand{\Ksep}{K^{\text{sep}}}
\newcommand{\phimod}{\text{Mod}^\phi_{/K}}
\newcommand{\phimodet}{\text{Mod}^\phi_{/K,\text{et}}}
\title{Factorization of skew polynomials over $\ku$}
\author{J\'er\'emy Le Borgne}
\newtheorem{theorem}{Theorem}[section]
\newtheorem{definition}[theorem]{Definition}
\newtheorem{proposition}[theorem]{Proposition}
\newtheorem{lemma}[theorem]{Lemma}
\newtheorem{corollary}[theorem]{Corollary}
\newtheorem{remark}[theorem]{Remark}
\newtheorem{example}[theorem]{Example}
\begin{document}
\author{Jérémy Le Borgne\thanks{Univ Rennes, IRMAR - UMR 6625, F-35000 Rennes, France}}

\date{September 25, 2022}
\maketitle

\begin{abstract}
    Let $k$ be a perfect field of characteristic $p>0$, and let $K = \ku$ be the field of Laurent series over $K$. We study the skew polynomial ring $K[T,\phi]$, where $\phi$ is an endomorphism of $K$ that extends a Frobenius endomorphism of $k$. We give a description of the irreducible skew polynomials, develop an analogue of the theory of the Newton polygon in this context, and classify the similarity classes of irreducible elements.
\end{abstract}
\tableofcontents
\section*{Introduction}
Let $p>0$ be a prime number and let $k$ be a perfect field of characteristic $p$. Let $K = \ku$ be the field of formal power series with coefficients in $k$. We denote by $v$ the valuation map on $K$. Let $\sigma$ be a power of the Frobenius automorphism of $k$ (possibly, $\sigma = \id$). Let $b\geq 2$ be an integer. The field $K$ is endowed with the endomorphism $\phi$ : $K \to K$ defined by:
\[\phi\left( \sum_{n \geq n_0} a_n u^n\right) = \sum_{n \geq n_0} \sigma(a_n)u^{bn}.\]
The ring of skew polynomials over $K$ with endomorphism $\sigma$, denoted by $K[T,\phi]$, is the $K$-vector space $K[T]$ endowed with the multiplication rule defined by $Xa = \phi(a)X$ for $a \in K$. In the general setting, such rings were introduced and studied by Ore (\cite{zbMATH03009786} and have led to extensive literature both for their theoretical study (\cite{zbMATH00960149}), practical arithmetics (see \cite{zbMATH01216480}, \cite{zbMATH07245215}), and applications (\cite{zbMATH03939252}, \cite{zbMATH05616450}) .\\
The paper is divided in five main sections. In the second section, we give an overview of the classical and fundamental case where $\phi$ is the Frobenius endomorphism and $K = \F_p\upar$ using the classical theory of the Newton polygon and the relation between skew polynomials and Galois representations, which we classify in this case. In the third section, we introduce a theory of the Newton polygon for skew polynomials over $K$ in the general case, where it is not possible use representation theory. In the fourth section, we give a description of irreducible skew polynomials. Namely, Theorem \ref{thm:classif-irred} shows that the irreducible skew polynomials are those whose Newton polygon has a single slope (which we call monoclinic) and whose corresponding reduction in $k[T,\sigma]$ is irreducible (in a slightly twisted way). In the fifht section, we give a classification of the similarity classes of irreducible polynomials, showing that each class can be described by the data of a slope and an irreducible element in $k[T, \sigma]$ and giving the conditions for similarity of these classes (see Proposition \ref{prop:irred-pmu} and Proposition \ref{prop:p1p2} for a detailed formulation). 

\section{Skew polynomials and $\phi$-modules over $K$}

Let $p>0$ be a prime number and let $k$ be a perfect field of characteristic $p$. Let $K = \ku$ be the field of formal power series with coefficients in $k$. Let $\sigma$ be a power of the Frobenius automorphism of $k$ (possibly, $\sigma = \id$). Let $b\geq 2$ be an integer. The field $K$ is endowed with the endomorphism $\phi$ : $K \to K$ defined by:
\[\phi\left( \sum_{n \geq n_0} a_n u^n\right) = \sum_{n \geq n_0} \sigma(a_n)u^{bn}.\]
One important example is the case when $\phi$ is the Frobenius endomorphism of $K$, $x \mapsto x^p$, for which we give a presentation in Section \ref{sub:classical} which takes advantage of the links between $\phi$-modules and Galois representations.

\begin{definition}
The ring of skew polynomials over $K$ with endomorphism $\phi$ is the ring $K[T,\phi]$. The elements of $T$ are the same as elements of $K[T]$, and multiplication is determined by the formula $Ta = \phi(a)T$ for all $a \in K$.
\end{definition}
\begin{example}
If $k = \F_{p^2} = \F_p(\alpha)$, endowed with the Frobenius endomorphism $\phi$, then in $K[T,\phi]$ one has:
\[ (T^2+uT+1)(\alpha  T+ 1+u) = \alpha T^3 + (1+\alpha^p u + u^{p^2}) T^2 + (\alpha + u + u^p) T + 1+u.
\]  
\end{example}
The ring $K[T,\phi]$ is noncommutative, but shares some nice properties with $K[T]$. In particular, it is right-euclidean, and thus has a notion of irreducible elements and a factorization theorem.
\begin{theorem}[Ore, \cite{zbMATH03009786}]\label{thm:ore}
Let $P \in K[T,\phi]$, then there exist $P_1, \dots, P_r \in K[T,\phi]$ irreducible elements such that $P = P_1\cdots P_r$.
\end{theorem}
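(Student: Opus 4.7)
The plan is to derive the theorem from multiplicativity of degree in $K[T,\phi]$ by a direct strong induction on $\deg P$. The preliminary step is to check that for nonzero $P, Q \in K[T,\phi]$ one has $\deg(PQ) = \deg P + \deg Q$. Writing the leading terms as $a_d T^d$ and $b_e T^e$, the commutation rule $Ta = \phi(a)T$ iterates to $T^d b_e = \phi^d(b_e)T^d$, so the coefficient of $T^{d+e}$ in $PQ$ equals $a_d \, \phi^d(b_e)$. This is nonzero because $\phi$ is injective (which is immediate from the explicit formula, using that $\sigma$ is an automorphism of $k$ and $b \geq 2$), hence every iterate $\phi^d$ is injective too, and $K$ is a field. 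In particular $K[T,\phi]$ is an integral domain, and its units are exactly the nonzero elements of $K$.

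Granted this, I argue by strong induction on $d = \deg P$ for $P$ a nonzero non-unit (the unit case being handled trivially by the empty product $r = 0$). If $P$ is irreducible, the statement holds with $r = 1$. Otherwise $P$ factors as $P = QR$ with both $Q$ and $R$ non-units, so $\deg Q \geq 1$ and $\deg R \geq 1$; combining this with the additivity $\deg Q + \deg R = d$ forces $\deg Q, \deg R < d$. The inductive hypothesis then furnishes factorizations of $Q$ and $R$ into irreducibles, and concatenating them produces the required factorization of $P$.

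The only real step to verify is the degree formula, and so there is essentially no obstacle; the argument mirrors the commutative case almost verbatim, once the injectivity of $\phi$ is noted. I would emphasize that this establishes only the \emph{existence} of an irreducible factorization: Ore's theorem also carries a Jordan--Hölder-type uniqueness statement for such factorizations up to similarity and permutation, which relies on the full right-Euclidean structure of $K[T,\phi]$ and goes beyond what is claimed in the statement above.
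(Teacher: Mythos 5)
Your argument is correct: the paper itself gives no proof of this statement (it is quoted from Ore with a citation), and what you have written is the standard existence argument — degree additivity from the injectivity of $\phi$, hence that nonzero constants are exactly the units, followed by strong induction on the degree. Your closing remark is also apt: the substantive content of Ore's theory is the Jordan--H\"older-type uniqueness up to similarity and permutation, which the paper invokes later and which does require the right-Euclidean structure, not just the degree formula.
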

Such a factorization is not unique in general, and describing how two given factorizations are related is easier to do in the language of $\phi$-modules. As for $K[T]$-modules in linear algebra, a module over $K[T,\phi]$ corresponds to a vector space endowed with an endomorphism, but in this case the endomorphism is merely \emph{semilinear} with respect to $\phi$.
\begin{definition}
A $\phi$-module over $K$ is a couple $(D,\phi_D)$ where $D$ is a finite dimensional vector space over $K$, and $\phi_D$ : $D \to D$ a $\phi$-semilinear endomorphism.
\end{definition}

A morphism of $\phi$-modules is a $K$-linear map $f$ : $D_1 \to D_2$ such that the following diagram is commutative:
\[\begin{tikzcd}
	{D_1} & {D_2} \\
	{D_1} & {D_2}
	\arrow["f", from=1-1, to=1-2]
	\arrow["f", from=2-1, to=2-2]
	\arrow["{\phi_{D_1}}"', from=1-1, to=2-1]
	\arrow["{\phi_{D_2}}", from=1-2, to=2-2]
\end{tikzcd}.\]
The set of $\phi$-modules over $K$ forms an abelian category which we denote by $\phimod$. We aim to study the full subcategory $\phimodet$ of {\'e}tale $\phi$-modules over $K$, whose objects are the $\phi$-modules $(D,\phi_D)$ such that $\phi_D(D)$ spans the $K$-vector space $D$.\\
Alternatively, let $R = K[T,\phi]$ be the noncommutative $K$-algebra of skew polynomials. Then $\phimod$ is equivalent to the category of $\text{Mod}_R$ of left-$R$-modules that have finite dimension over $K$ (the semilinear map $\phi_D$ corresponds to the map of left-multiplication by the indeterminate $T$).
 If $\mathcal B = (e_1, \dots, e_d)$ is a basis of the $\phi$-module $D$, and $(e_1^*, \dots, e_d^*)$ is the dual basis, then the matrix of $\phi_D$ in the basis $\mathcal B$ is the matrix $M \in \mathcal{M}_d(K)$ whose coefficient in position $(i,j)$ is $e_i^*(\phi_D(e_j))$. In the case, if $P \in GL_d(K)$, the matrix of $\phi_D$ in the basis given by the columns of $P$ is $P^{-1}M\phi(P)$, where $\phi(P)$ is the matrix obtained from $P$ by applying $\phi$ to each coefficient. In particular, since $b\geq 2$, there exists a basis $\mathcal B$ of $D$ such that $\kubr$ submodule generated by the basis is stable by $\phi_D$, \emph{i.e.} such that the matrix of $\phi_D$ is this basis has coefficients in $\kubr$.\\
This point of view allows us to describe more precisely how two factorizations of $P \in K[T,\phi]$ (as in Theorem \ref{thm:ore}) are related.
\begin{definition}
Let $A,B \in K[T,\phi]$, then $A$ and $B$ are similar if the corresponding $\phi$-modules $K[T,\phi]/K[T,\phi]A$ and $K[T,\phi]/K[T,\phi]B$ are isomorphic.
\end{definition}
Then, the number of occurrences of each similarity class of irreducible skew polynomials that appear in a factorization of a given skew polynomial $P$ does not depend on the factorization, but only on $P$ itself.\\
The aim of this paper is to give a description of irreducible elements of $K[T,\phi]$, give a classification of similarity classes, and describe how the similarity classes of the irreducible factors of a skew polynomial can be determined.\\
On a broader scope, we also aim to set the theoretical foundations to give a factorization algorithm for elements of $K[T, \phi]$, which is planned in future work.
\section{The classical case}\label{sub:classical}
The classical case for $\phi$-modules is the case when $\phi$ is (a power of the) Frobenius morphism. For the sake of simplicity, we will assume that $\phi(x) = x^p$ for all $x \in K$. Let $\Ksep$ be a separable closure of $K$, then $\Ksep$, which is endowed with the canonical Frobenius morphism $x \mapsto x^p$ that we also denote by $\phi$. Let us recall how the classical theories of $\phi$-modules (see \cite{zbMATH03425708}) and Newton polygons (see \cite{zbMATH03957242} , Chap. 6) apply in this case.
\subsection{Skew polynomials and $\phi$-modules over $K$}
Let $(D,\phi_D)$ be an étale $\phi$-module over $K$. Then $\text{Hom}_{K,\phi}(D,\Ksep)$ is a $\F_p$-vector space. Moreover, the Galois group $\mathcal{G} = \text{Gal}(\Ksep/K)$ acts on $\text{Hom}_{K,\phi}(D,\Ksep)$, and $V$ is invariant under this action. Therefore, $V$ is naturally a $\F_p$-linear representation of $\mathcal{G}$. Conversely, if $V$ is a $\F_p$-representation of $\mathcal{G}$, then $\text{Hom}_{\mathcal{G}}(V,\Ksep)$ is an étale $\phi$-module over $K$, and these two constructions are converse of each other (thus, the corresponding functors are equivalences of categories between $\phi$-modules over $K$ and $\F_p$-representations of $\mathcal{G}$).\\
\[
\begin{array}{rcl}
\{\F_p-\text{representations of }\mathcal{G}_K\
 &\longrightarrow  & \{\text{{\'E}tale }\phi\text{-modules over }K\}\\
V & \mapsto & \text{Hom}_{{\mathcal G}_K}(V, \Ksep)\\
\text{Hom}_{K, \phi}(D, \Ksep) &\mapsfrom &D
\end{array}
\]
Now, let $P \in K[T,\phi]$ and let $D_P = K[T,\phi]/K[T,\phi]P$. Then the corresponding Galois representation is the set $V_P$ of roots of $P(\phi)$ in $\Ksep$ (more precisely, the map $D_P \to V_P$ defined by $f \mapsto f(t)$ is an isomorphism of $\F_p$-representations of $\mathcal{G}$ from $V$ to $V_P$).
\begin{proposition}
Let $V_P \subset \Ksep$ be the $\F_q$-vector subspace of roots of $P(\phi)$. Then $V$ has a nondecreasing filtration $(V_{\mu})_{\mu \in \R}$ of $V$ by subrepresentation. The jumps of the filtration are the opposite of the valuations of the elements of $V$.
\end{proposition}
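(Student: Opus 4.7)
The plan is to build the filtration directly from the valuation on $\Ksep$. Since $K = \F_p\upar$ is complete with respect to $v$, the valuation extends uniquely to a $\Q$-valued valuation on $\Ksep$, which I still denote by $v$. By uniqueness of the extension, this valuation is automatically invariant under the action of $\mathcal{G} = \mathrm{Gal}(\Ksep/K)$: for every $g \in \mathcal{G}$, the map $v \circ g$ is another extension of $v$ from $K$ to $\Ksep$, hence $v \circ g = v$.

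For $\mu \in \R$, I would define
\[V_\mu = \{x \in V_P : v(x) \geq -\mu\} = V_P \cap \{x \in \Ksep : v(x) \geq -\mu\}.\]
I would then check in turn that $V_\mu$ is an $\F_p$-subspace, using the ultrametric inequality together with the fact that nonzero elements of $\F_p$ have valuation zero; that $V_\mu$ is stable under $\mathcal{G}$, because Galois preserves $v$ and stabilizes $V_P$, hence stabilizes the intersection; and that $\mu_1 \leq \mu_2$ implies $V_{\mu_1} \subseteq V_{\mu_2}$, immediately from the definition.

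For the statement about jumps, I use that $V_P$ is a finite-dimensional $\F_p$-vector space, hence a finite set, so the set $S = \{-v(x) : x \in V_P \setminus \{0\}\} \subset \Q$ is finite. For $\mu \notin S$, a short check shows the filtration is locally constant near $\mu$; and conversely, if $\mu \in S$ and $x \in V_P$ realises $v(x) = -\mu$, then $x$ lies in $V_\mu$ but not in $V_{\mu'}$ for any $\mu' < \mu$, so $\mu$ is a jump. The jumps of $(V_\mu)_{\mu \in \R}$ are therefore exactly the elements of $S$, which is the claim.

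The whole argument is essentially bookkeeping around the non-archimedean property of $v$; the only real structural input is the uniqueness of the extension of $v$ to $\Ksep$, which is what forces the filtration to be $\mathcal{G}$-equivariant. I do not anticipate any serious obstacle.
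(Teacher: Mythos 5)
Your proposal is correct and follows essentially the same route as the paper: both define $V_\mu = \{x \in V_P : v(x) \geq -\mu\}$, use Galois-invariance of the extended valuation to see it is a subrepresentation, and identify the jumps with the negatives of the valuations of the nonzero roots. You simply make explicit some details the paper leaves implicit (uniqueness of the valuation extension and finiteness of the set of valuations).
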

\begin{proof}
 By the classical theory of the Newton polygon, the valuations of the roots of the linearized polynomial $P(\phi)$ can be recovered from its Newton polygon. Let $\mu \in \R$ and let $V_\mu = \{x \in V,~v(x) \geq -\mu\}$. Then $V_\mu$ is a $\F_q$-subspace of $V$ that is stable under the action of $\mathcal{G}$, \emph{i.e.} a subrepresentations of $V$. Therefore, the family $(V_\mu)_{\mu \in \R}$ is an increasing filtration of $V$. Let $V_\mu^+ = \{x \in V,~v(x) >-\mu\}$, then $V_\mu \neq V_{\mu}^+$ if and only if $P(\phi)$ has a root of valuation $-\mu$. Thus, the slope filtration has finitely many jumps that are the opposite of the valuations of the roots of $P(\phi)$. In general, a factor of $P(\phi)$ of given valuation does not correspond to a subrepresentation of $V$ (only to a subquotient of $V$), but a right factor does.\\
 If the polynomial $P \in K[T,\phi]$ is irreducible, then so is the representation $V_P$, and therefore the Newton polygon of $P(\phi)$ only has one slope. The corresponding filtration on the $\phi$-module $D_P$ yields a factorization of the skew polynomial $D_P$ as a product of skew polynomials with a single slope, which may not be irreducible.
\end{proof}
These results are generalized in \cite{Car09} to the case of a more general endomorphism $\phi$, when $k$ is algebraically closed. The first section of the present paper is to further generalize these results simultaneously in two directions: first, to the case of a perfect residue field, and second to the more general version of the endomorphism $\phi$ that has already been introduced.
\subsection{Irreducible Galois representations}
In order to highlight what type of result should be expected for the first generalization that we indend to give, we present an elementary version in the classical case (where $\phi$ is the Frobenius endomorphism) based on the classification of irreducible Galois representations and the equivalence of categories of the previous section.\\
In this subsection, we assume that $k=\F_p$. Let $K = \ku$, $\Ksep$ be the separable closure of $K$ and let ${\mathcal G}_K = \text{Gal}(\Ksep/K)$ be the absolute Galois group of $K$. Recall that this group has a natural ramification filtration, which cuts out a filtration of $\Ksep$ by subextensions whose first steps are $\Ksep \supset K^\text{tr} \supset K^{\text{ur}}\supset K$. Let $I_t = \text{Gal}(K^\text{tr}/K^\text{ur})$ be the tame inertia subgroup. It is an abelian group and for all $n \geq 1$, the fundamental character of level $n$ is defined as:
\[
\omega_n : \left|\begin{array}{ccc}
I_t & \to & \bar{\F}_p^\times\\
g & \mapsto & \frac{g u_n}{u_n},
\end{array}\right.
\]
where $u_n = u^{\frac{1}{p^n - 1}}$. Further, let $\sigma \in \mathcal{H}_K$ such that the projection of $\sigma$ in $\text{Gal}(K^\text{ur}/K) \simeq \text{Gal}(\bar{\F}_p/\F_p)$ is the Frobenius map $x \mapsto x^p$. 
\begin{proposition}\label{prop:Fprep}
The irreducible $\F_p$-representations of ${\mathcal G}_K$ are described by:
\begin{itemize}
\item a level $\delta$ and an integer $0 \leq s \leq p^\delta -1$,
\item an irreducible skew polynomial $P = T^m - \sum_{i=0}^{m-1} \lambda_i T^i \in \F_{p^\delta}[T,\sigma]$.
\end{itemize}
The corresponding representation has a basis over $\F_{p^\delta}$ that is of the form $(x, \sigma x, \dots, \sigma^{m-1}x)$ such that:
\begin{itemize}
\item $\forall g \in I_t$, $gx = \omega_\delta^s(g)x$,
\item $\sigma^m x = \sum_{i=0}^{m-1} \lambda_i \sigma^i x$,
and $V$ only depends up to isomorphism on the digits of $s$ in base $p$ and the similarity class of $P$.
\end{itemize}
\end{proposition}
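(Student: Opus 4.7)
The plan is to reduce from $\mathcal{G}_K$ to its tame quotient, diagonalize the action of the abelian group $I_t$, and then read off the skew polynomial $P$ from the action of a Frobenius lift on a well-chosen eigenvector.

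First I would check that the wild inertia $I_w = \text{Gal}(\Ksep/K^{\text{tr}})$ acts trivially on $V$. Because $I_w$ is pro-$p$ and $V$ is a finite $\F_p$-vector space, the standard orbit-counting argument (every $I_w$-orbit on $V$ has $p$-power size, and $|V|$ is itself a power of $p$) forces $V^{I_w}$ to be nonzero; irreducibility then gives $V^{I_w}=V$. So $V$ descends to a representation of the tame quotient, which is topologically generated by $I_t$ and the Frobenius lift $\sigma$ subject to $\sigma g \sigma^{-1} = g^p$ for $g\in I_t$.

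Next I would analyze the $I_t$-action. Since $I_t$ is abelian and pro-prime-to-$p$, $V \otimes_{\F_p} \bar{\F}_p$ decomposes as a direct sum of character eigenspaces; each eigencharacter factors through some $\omega_\delta$, hence is of the form $\omega_\delta^s$ with $0 \le s \le p^\delta - 1$. I would pick such a $\chi = \omega_\delta^s$ appearing in $V$ with $\delta$ chosen minimal, and select a nonzero $\chi$-eigenvector $x$; minimality of $\delta$ guarantees that the corresponding eigenspace is $\F_{p^\delta}$-rational, so that the subfield $\F_{p^\delta} \subset \text{End}_{\F_p}(V)$ generated by the values of $\chi$ on $I_t$ gives $V$ a natural $\F_{p^\delta}$-structure. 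The conjugation relation then forces $\sigma^i x$ to be a $\chi^{p^i}$-eigenvector, so the vectors $x,\sigma x, \sigma^2 x, \dots$ lie in pairwise distinct eigenspaces until the pattern of characters repeats. Let $m$ be the smallest integer such that $\sigma^m x$ lies in the $\F_{p^\delta}$-span $W$ of $x, \sigma x, \dots, \sigma^{m-1} x$. Then $W$ is stable under $I_t$ (a sum of eigenspaces), under $\sigma$ (by construction), and under $\F_{p^\delta}$-scalars, hence defines an $\F_p$-subrepresentation of $V$; by irreducibility, $W=V$. Writing $\sigma^m x = \sum_{i=0}^{m-1} \lambda_i \sigma^i x$ gives the polynomial $P = T^m - \sum \lambda_i T^i \in \F_{p^\delta}[T,\sigma]$, and any factorization $P=QR$ would cut out a proper subrepresentation, so $P$ is irreducible.

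Finally I would show that the invariants of $V$ are exactly the cyclic orbit of $s$ under multiplication by $p$ modulo $p^\delta-1$ (equivalently, the multiset of base-$p$ digits of $s$) and the similarity class of $P$. Replacing the starting eigenvector $x$ by $\sigma^j x$ translates $(s,P)$ into $(p^j s \bmod p^\delta-1, P')$ with $P'$ similar to $P$ via the usual correspondence between similarity of skew polynomials and isomorphism of cyclic $K[T,\phi]$-modules; conversely, if two pairs produce isomorphic representations, comparing the eigencharacter orbits of $I_t$ and then the skew-cyclic-module structure forces the stated relation. The main obstacle I anticipate is the careful bookkeeping around the $\F_{p^\delta}$-structure: one must argue that $\delta$ is intrinsically the smallest level over which some eigencharacter of $V$ is defined, that the span $W$ above is genuinely closed under $\F_{p^\delta}$-scalars (not just under $\F_p$-scalars), and that the semilinear Frobenius action meshes correctly with this $\F_{p^\delta}$-structure so as to produce a well-defined skew polynomial $P$ in $\F_{p^\delta}[T,\sigma]$ rather than in a larger ring.
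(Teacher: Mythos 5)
Your proposal is correct and follows the same overall skeleton as the paper's proof: kill the wild inertia, decompose $V$ under the tame inertia $I_t$, and extract $(\delta,s,P)$ from the action of a Frobenius lift on the resulting pieces. Two steps are executed differently. Where you diagonalize $I_t$ over $\bar{\F}_p$ and then descend the chosen eigenspace to $\F_{p^\delta}$ by a rationality argument, the paper stays over $\F_p$: it takes an irreducible $\F_p[I_t]$-subrepresentation $W$ and produces the field $\F_{p^\delta}$ intrinsically as $\mathrm{End}_{I_t}(W)$ via Schur's lemma and Wedderburn's theorem. That choice buys exactly the point you flag as your main obstacle --- the $\F_{p^\delta}$-structure on $W$, and hence on $V=\bigoplus_i \sigma^i W$, comes for free from the endomorphism algebra, with no descent bookkeeping. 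For well-definedness of the invariants, you track explicitly how $(s,P)$ transforms when $x$ is replaced by $\sigma^j x$; the paper instead notes that the norm $\mathcal{N}(P)$ is the minimal polynomial of the $E$-linear map $\sigma^\delta$, hence independent of all choices, and that the norm determines the similarity class in $\F_{p^\delta}[T,\sigma]$. Your converse direction (``comparing the eigencharacter orbits \dots forces the stated relation'') is the least developed part of your sketch and is precisely what the norm argument makes rigorous, so if you keep your route you should carry out that comparison in detail. On the other hand, your explicit pro-$p$ fixed-point argument for the triviality of the wild inertia action is a genuine addition: the paper only asserts that step.
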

\begin{proof}
Let $V$ be an irreducible representation of ${\mathcal G}_K$. Let $I_p \subset {\mathcal G}_K$ be the wild inertia sugroup: since $V$ is irreducible, the action of ${\mathcal G}_K$ factors through $I_p$ and yields an irreducible representation of $\mathcal{H}_K = {\mathcal G}_K/I_p$. Now, let $W \subset V$ be an irreducible representation of $I_t$. Then $E = \text{End}_{I_t}(W)$ is a division algebra by Schur's lemma, hence a field by Wedderburn's theorem. This endows $W$ with a natural structure of $E$-representation. Since $I_t$ is abelian, the elements of $I_t$ act as elements of $E$, so that $W$ is has dimension 1 as a $E$-vector space. Let $\delta = \dim W$, then $W$ can be identified with a character $\omega$ : $I_t \to \F_{p^\delta}^\times$. Since $I_t$ is a procyclic group, such characters are precisely the powers of the fundamental character of level $\delta$, \emph{i.e.} there exists an integer $0 \leq s \leq p^\delta - 1$ such that $\omega = \omega_\delta^s$. We may see as a $\F_p$-representation through the choice of a basis of $\F_{p^\delta}/\F_p$: any other choice of basis may lead to a different value of $s$, but the digits of $s$ in base $p$ does not depend on this choice. These digits are the tame inertia weights of the representation $W$.\\
It remains to see how the Frobenius acts on $V$. Assume $W \simeq \omega_{\delta}^s$, then $\sigma W$ is stable by $I_t$ and, as a representation of $I_t$, is isomorphic to $\omega_\delta^{ps}$ (because of the relation $\sigma g \sigma^{-1} = g^p$ for all $g \in I_t$). Let $m$ be maximal such that the sum $W_m = W + \sigma W + \cdots + \sigma^{m-1}W$ is a direct sum of $\F_{p^\delta}$-vector spaces. Then by hypothesis, $\sigma^r W \cap W_r$ is nonzero, so that $\sigma^r W \subset W_r$. Therefore, $W_m$ is stable under the action of $I_t$ and $\sigma$, and thus $W_m = V$. One has:
\[V = \omega_{\delta}^s \oplus \omega_{\delta}^{ps} \oplus \cdots \oplus \omega_{\delta}^{p^{m-1}s}. \]
Let $x \in W$, then there exist $\lambda_0,\dots,\lambda_{r_1}$ such that $\sigma^r x = \lambda_0 x + \lambda_1 \sigma x + \cdots + \lambda_{r-1}\sigma^{m-1}x$. The polynomial $T^m - \sum_{i=0}^{m-1} \lambda_i T^i$ depends on the choice of $W$ and $x$, but its similarity class in $E[T,\sigma]$ does not. Indeed, the map $\sigma^\delta$ is $E$-linear and its minimal polynomial is the norm $\mathcal{N}(P)$, which does not depend on any choice and completely determines the similarity class of $P$ (see \cite{zbMATH06680379}, Prop. 2.1.17.).
\end{proof}

\subsection{Irreducible skew polynomials in $K[T,\phi]$ (classical case)}
We still assume that $K = \F_p\upar$. According to the previous subsections, an irreducible element $A \in K[T,\phi]$ corresponds to an irreducible representation of $\mathcal{G}_K$, and therefore its similarity class can be described by a level $\delta$, a weight $s$, and an irreducible polynomial $Q \in \F_p[T^\delta]$. Let us now explain how these invariants may be computed directly from $A$.
Let $s \in \Z$, let $\delta \in \N^*$ and let $m \in \N^*$. Let $v = -\frac{p^{m\delta}-1}{p^{\delta} -1} \in \N$. Let $\lambda_1, \dots, \lambda_{m-1} \in k$ and $\lambda_0 \in k^\times$.
\begin{proposition}\label{prop:repres_assoc}
Let $A= (T^{\delta m} - \lambda_{m-1} T^{\delta (m-1)} - \cdots -\lambda_0)u^{sv} \in K[T^\delta,\phi]$. Then 
 there exists $x \in V$ such that for $g \in I_t$, $gx = \omega_\delta^s(g)x$ and such that $\sigma^{\delta m} x = \sum_{j=0}^{m-1} \lambda_{j}\sigma^{\delta j}x$.
\end{proposition}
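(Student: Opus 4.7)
The plan is to construct $x \in V$ explicitly via the ansatz $x = u^{-sv} y$ for a suitable $y \in \Ksep$, and verify the two conditions by direct computation; the exponent $-sv$ is chosen precisely so that the equation $A(\phi)(x) = 0$ reduces to a separable classical polynomial equation for $y$.

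First I would expand $A$ in $K[T,\phi]$ by pushing the factor $u^{sv}$ past the powers of $T$ via $Ta = \phi(a) T$ (with $\phi(u) = u^p$), obtaining
\[A = u^{svp^{\delta m}}\, T^{\delta m} - \sum_{j=0}^{m-1} \lambda_j\, u^{svp^{\delta j}}\, T^{\delta j}.\]
Substituting $x = u^{-sv} y$ into $A(\phi)(x) = 0$, the factors $u^{svp^{\delta j}}(u^{-sv})^{p^{\delta j}}$ all collapse to $1$, so the equation becomes $Q(\phi)(y) = 0$, where $Q := T^{\delta m} - \sum_{j=0}^{m-1} \lambda_j T^{\delta j} \in \F_p[T,\phi]$ is the underlying ``reduction'' of $A$. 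Since $\lambda_0 \neq 0$, $Q(\phi)$ is a separable $\F_p$-linear polynomial of degree $p^{\delta m}$ in $y$, whose nonzero roots form an $\F_p$-subspace of $\bar{\F}_p \subset K^{\text{ur}}$ of dimension $\delta m$; pick any nonzero such $y$ and take $x = u^{-sv} y$.

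The Frobenius recurrence is then immediate: $\sigma$ fixes $K$ (in particular $u^{-sv}$) and acts on $\bar{\F}_p$ as the $p$-th power map, so $\sigma^{\delta j}(x) = u^{-sv}\, y^{p^{\delta j}}$; multiplying the identity $y^{p^{\delta m}} = \sum \lambda_j y^{p^{\delta j}}$ by $u^{-sv}$ yields exactly $\sigma^{\delta m}(x) = \sum \lambda_j \sigma^{\delta j}(x)$.

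The tame character condition is the delicate step, and the main obstacle. The naive choice $y \in \bar{\F}_p$ places $x$ in $K^{\text{ur}}$, which is fixed pointwise by $I_t$, so to actually see $\omega_\delta^s$ one must work in the tame extension $K^{\text{tame}} = K^{\text{ur}}(u_\delta)$ where $u_\delta = u^{1/(p^\delta-1)}$. Rewriting the ansatz as $x = u_\delta^s\, \widetilde{y}$ makes $g(u_\delta^s) = \omega_\delta^s(g)\, u_\delta^s$ carry the required character on $I_t$, while $\widetilde{y}$ must be chosen to preserve $A(\phi)(x) = 0$. The specific exponent $v = -(p^{m\delta}-1)/(p^\delta-1)$ is calibrated precisely so that this rewriting reduces to the same auxiliary equation $Q(\phi)(\widetilde{y}) = 0$ once one uses $u = u_\delta^{p^\delta-1}$ to rebalance the $u_\delta$-powers; tracking these powers, together with the factor $\sigma(u_\delta)/u_\delta$ (a fixed $(p^\delta-1)$-th root of unity) in the Frobenius recurrence, is the crux of the argument.
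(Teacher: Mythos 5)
Your expansion of $L_A$ and the substitution $x = u^{-sv}y$ are computed correctly, and the Frobenius recurrence does follow as you say. The genuine gap is the tame-character condition, which you explicitly defer (``the crux of the argument'') rather than prove --- and along the route you have chosen it cannot be completed. Your own collapse computation shows that the root set of $L_A$ is exactly $u^{-sv}\cdot\ker Q(\phi)$ with $\ker Q(\phi)\subset\bar{\F}_p$; since $sv\in\Z$, this places all of $V$ inside $K^{\mathrm{ur}}$, on which $I_t$ acts trivially. Hence no element of $V$ could transform by a nontrivial $\omega_\delta^s$, and the proposed repair $x = u_\delta^s\widetilde{y}$ is foreclosed by the computation you have already done: there is no residual fractional power of $u$ left to carry the character, because the integer exponent $-sv$ absorbs the entire $u$-contribution of every root.

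The paper's proof diverges at exactly this point: it reads off from the single-slope Newton polygon that all nonzero roots have the \emph{fractional} valuation $s/(p^\delta-1)$, and rescales by $u^{-s/(p^\delta-1)} = u_\delta^{-s}$ to land on the residual equation over $\bar{\F}_p$; it is precisely the factor $u_\delta^{s}$ in each root that produces $gx=\omega_\delta^s(g)x$ on $I_t$, while the integrality of $v$ only serves to keep $A$ inside $K[T,\phi]$. The tension between your (correct) calculation, which gives slope $sv\in\Z$ for $A$ as literally written, and the paper's claimed slope $-s/(p^\delta-1)$ signals a normalization discrepancy in the statement (the coefficient of $T^{\delta j}$ needs valuation $s\frac{p^{\delta m}-p^{\delta j}}{p^\delta-1}$, not $svp^{\delta j}$, for the roots to have the fractional valuation the argument requires). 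You should have either resolved that discrepancy or flagged that it makes the character condition fail; as submitted, the argument does not establish $gx=\omega_\delta^s(g)x$.
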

\begin{proof}
By definition, the corresponding representation of $\mathcal{G}_K$ is the set of roots of
\[ L_A = u^{p^{\delta m }sv}x^{p^{\delta m}} - u^{p^{\delta (m-1) }sv}\lambda_{m-1}x^{p^{\delta(m-1)}} - \cdots -\lambda_0u^{sv}x.\]
The Newton polygon of this polynomial has only one slope, equal to $-\frac{s}{p^\delta-1}$, so that all the nonzero roots of this polynomial have the same valuation, equal to $\frac{s}{p^\delta - 1}$. More precisely, if $\xi$ is a root of $A$, then $u^{-\frac{s}{p^\delta -1}}\xi$ is a root of $x^{p^{\delta m}} - \sum_{j=0}^{m-1}\lambda_j x^{p^{\delta j}}$, so that $x^{\delta m} - \sum_{j=0}^{m-1} \lambda_i x^{\delta i}$ is the minimal polynomial of $\sigma$ over $\F_p$. 
\end{proof}
\begin{corollary}
Every irreducible {\'e}tale skew polynomial in $K[T,\phi]$ is of the form $\bar A u^{sv}$ where $\bar A \in k[T^\delta]$ is irreducible of degree $m$ (for some $m$), $s \in \Z$ and $v = \frac{p^{m\delta} - 1}{p^\delta - 1}$.
\end{corollary}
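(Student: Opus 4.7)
The plan is to chain the equivalence of categories between étale $\phi$-modules over $K$ and $\F_p$-representations of $\mathcal{G}_K$ with Propositions~\ref{prop:Fprep} and \ref{prop:repres_assoc}, using the first to classify and the second to produce an explicit representative of each similarity class.

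More precisely, let $P \in K[T,\phi]$ be irreducible and étale. Then the quotient $D_P = K[T,\phi]/K[T,\phi]P$ is a simple étale $\phi$-module, so under the equivalence it corresponds to an irreducible $\F_p$-representation $V_P$ of $\mathcal{G}_K$. I would apply Proposition~\ref{prop:Fprep} to $V_P$ to extract a level $\delta \geq 1$, an integer $s$ with $0 \leq s \leq p^\delta - 1$, and the similarity class of an irreducible skew polynomial $Q \in \F_{p^\delta}[T,\sigma]$ of some degree $m$. As noted at the end of the proof of that proposition, the similarity class of $Q$ is entirely determined by its reduced norm $\bar A := \mathcal{N}(Q)$, which lies in the center $\F_p[T^\delta]$ and is irreducible of degree $m$ in the variable $T^\delta$.

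Next I would feed the triple $(\delta, s, \bar A)$ into Proposition~\ref{prop:repres_assoc}: the explicit skew polynomial $\bar A u^{sv}$ with $v = \frac{p^{m\delta} - 1}{p^\delta - 1}$ has associated Galois representation isomorphic to the one built from $(\delta, s, \bar A)$, which by the classification in Proposition~\ref{prop:Fprep} is itself isomorphic to $V_P$. Pulling back through the equivalence of categories once more, $D_P \simeq D_{\bar A u^{sv}}$, so $P$ is similar to $\bar A u^{sv}$, which is the claimed form.

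The main obstacle is not the logical structure of the argument, which is merely a composition of the preceding results, but the bookkeeping that identifies the upstairs datum $Q \in \F_{p^\delta}[T,\sigma]$ from Proposition~\ref{prop:Fprep} with the downstairs polynomial $\bar A \in k[T^\delta]$ required as input by Proposition~\ref{prop:repres_assoc}. This bridge rests on the fact that the center of $\F_{p^\delta}[T,\sigma]$ is $\F_p[T^\delta]$ and that the reduced norm sets up a bijection between similarity classes of irreducible elements of degree $m$ in $\F_{p^\delta}[T,\sigma]$ and monic irreducibles of degree $m$ in $\F_p[T^\delta]$, which is the content of the Jacobson-type result invoked at the end of the proof of Proposition~\ref{prop:Fprep}.
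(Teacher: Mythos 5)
Your proposal is correct and follows essentially the same route as the paper: it combines Proposition~\ref{prop:Fprep} (classification of irreducible representations by the data $(\delta, s, Q)$) with Proposition~\ref{prop:repres_assoc} (explicit realization of that representation by $\bar A u^{sv}$) through the equivalence of categories, just spelled out in more detail than the paper's two-line argument. Note that, like the paper's own proof, what you actually establish is that $P$ is \emph{similar} to a polynomial of the stated form rather than literally equal to one, but this matches the intended reading of the corollary.
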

\begin{proof}
By Proposition \ref{prop:repres_assoc}, each such polynomial corresponds to an irreducible representation of $\mathcal{G}_K$ and by Proposition \ref{prop:Fprep} every irreducible representation of $\mathcal{G}_K$ is of this form.
\end{proof}
\section{The Newton polygon of a skew polynomial}
From now on, we revert to the more general setting where $k$ is perfect of characteristic $p>0$, $K = \ku$ and $\phi$ is such that $\phi(u) = u^b$ with $b \geq 2$ an integer, and the restriction of $\phi$ to $K$ is a power of the Frobenius endomorphism. The goal of this section is to introduce a notion of Newton polygons for elements of $K[T,\phi]$ that is analogous to the classical theory.
\subsection{Definitions}
\begin{definition}
Let $A \in K[T,\phi]$, $A = \sum_{i=0}^d a_i T^i$. The \emph{Newton polygon of $P$} is the lower convex hull of the set of points $\{(b^i, v(a_i)),~0\leq i \leq d\} \subset \R^2\}$.
\end{definition}
\begin{definition}
The \emph{slopes} of the skew polynomial $A \in K[T,\phi]$ are the slopes of its Newton polygon.\\
The \emph{multiplicity} of a slope $\mu$ of $A$ is $m$ if the endpoints of the segment of slope $\mu$ in the Newton polygon of $A$ are of the form $(b^i, v(a_i)), (b^{i+m}, v(a_{i+m}))$.\\
The skew polynomial $A$ is said to be $\emph{monoclinic}$ if it has only one slope.
\end{definition}
\begin{definition}
Let $A \in K[T,\phi]$ and let $\mu$ be a slope of $A$. Then there exists a unique $\nu \in \Q$ such that $u^\nu Au^{-\mu}$ has valuation zero. The reduction of $u^\nu A u^{-\mu}$ modulo the ideal of skew polynomials of positive valuation is called the \emph{$\mu$-reduction} of $A$.
\end{definition}

\subsection{The slopes of a product of skew polynomials}
\begin{lemma}\label{lem:slopes-prod}
Let $P,Q \in K[T,\phi]$, with $P = \sum_{i=0}^n p_iT^i$, and $Q = \sum_{j=0}^d q_jT^j$ such that:
\begin{itemize}
    \item $\min_{0 \leq i \leq n}v(p_i) = 0$,
    \item $q_d = 1$, $v(q_0) = 0$, and $v(q_i)\geq 0$ for all $1 \leq i \leq d-1$.
\end{itemize}
Let $i_0 = \min\{0 \leq i \leq n,~v(p_i) = 0\}$ and $i_1 = \max\{0 \leq i \leq n,~v(p_i) = 0\}$. Then the vertices of the Newton polygon of $A = PQ$ are:
\begin{itemize}
    \item $(b^i, v(p_i))$ if $i \leq i_0$ and $(b^i, v(p_i))$ is a vertex of the Newton polygon of $P$,
    \item $(b^{i+d},v(p_i))$ if $i\geq i_1$ and $(b^i, v(p_i))$ is a vertex of the Newton polygon of $P$.
\end{itemize}
In particular, the Newton polygon of $PQ$ has an edge of slope $0$ and multiplicity $i_1 - i_0 + d$.
\end{lemma}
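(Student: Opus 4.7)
The plan is to determine $v(a_k)$ for every coefficient of $A = PQ$. Since
\[a_k = \sum_{i+j=k} p_i\phi^i(q_j),\]
the ultrametric inequality gives $v(a_k) \geq \min_{i+j=k}(v(p_i) + b^i v(q_j))$, with equality whenever the minimum is attained by a unique pair $(i,j)$. Write $f_P$ for the Newton polygon of $P$ viewed as a convex piecewise-linear function on $[1,b^n]$, and let $g$ be the candidate Newton polygon of $A$, namely the concatenation of $f_P|_{[1,b^{i_0}]}$, the horizontal segment from $(b^{i_0},0)$ to $(b^{i_1+d},0)$, and the translate of $f_P|_{[b^{i_1},b^n]}$ under the change of variable $b^i \mapsto b^{i+d}$. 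The goal is to prove $v(a_k) \geq g(b^k)$ for every $k$, with equality at each vertex of $g$; this identifies $g$ with the Newton polygon of $A$ and yields both the list of vertices and the slope-$0$ edge of multiplicity $i_1 + d - i_0$.

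The core computation is a supporting-line argument at each vertex of $f_P$. Fix a vertex $(b^{i^*},v(p_{i^*}))$ of $f_P$ with $i^* \leq i_0$, and choose a line $y = \mu x + c$ that supports $f_P$ only at $b^{i^*}$. Because $i^* \leq i_0$, both slopes of $f_P$ adjacent to $(b^{i^*},v(p_{i^*}))$ are non-positive, so I may take $\mu < 0$. For any pair $(i,j) \neq (i^*,0)$ with $i + j = i^*$, one has $j \geq 1$ and $i < i^*$, hence
\[v(p_i) + b^i v(q_j) \geq v(p_i) > \mu b^i + c > \mu b^{i^*} + c = v(p_{i^*}),\]
using $v(q_j) \geq 0$, the strict bound $v(p_i) > \mu b^i + c$ (the line touches $f_P$ only at $b^{i^*}$), and $\mu < 0$ with $b^i < b^{i^*}$. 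The $(i^*,0)$-term equals $p_{i^*}$ and has valuation $v(p_{i^*})$, so the minimum is uniquely attained and $v(a_{i^*}) = v(p_{i^*})$. The mirror argument at a vertex with $i^* \geq i_1$, using $q_d = 1$ and a supporting line of slope $\mu > 0$, gives $v(a_{i^*+d}) = v(p_{i^*})$.

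To complete the picture, I bound $v(a_k)$ at the remaining $k$. For $k \in [i_0,i_1+d]$ every summand in $a_k$ has non-negative valuation because $v(p_i), v(q_j) \geq 0$, so $v(a_k) \geq 0 = g(b^k)$. For $k < i_0$ that does not correspond to a vertex of $f_P$, take $y = \mu x + c$ to be the line carrying the facet of $f_P$ at $b^k$, so $\mu \leq 0$; the estimate $v(p_i) + b^i v(q_j) \geq \mu b^i + c$ combined with the monotonicity of $i \mapsto \mu b^i + c$ for $\mu \leq 0$ gives $v(a_k) \geq \mu b^k + c = f_P(b^k) = g(b^k)$. Symmetrically, for $k > i_1 + d$ one uses the facet of $f_P$ at $b^{k-d}$, which has slope $\mu \geq 0$. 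Combining everything, every point $(b^k,v(a_k))$ lies on or above $g$, with equality at the claimed vertices, so $g$ is exactly the Newton polygon of $A$ and its horizontal edge runs from $(b^{i_0},0)$ to $(b^{i_1+d},0)$ with multiplicity $i_1 + d - i_0$.

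The main obstacle is the supporting-line computation of the second paragraph. Its two essential inputs are the sign of $\mu$, forced by the geometric position of the vertex ($i^* \leq i_0$ or $i^* \geq i_1$) relative to the slope-$0$ zone of $f_P$, and the hypothesis $v(q_j) \geq 0$ on the coefficients of $Q$; together they ensure that every off-diagonal term in $a_{i^*}$ or $a_{i^*+d}$ strictly exceeds the diagonal one. Relaxing either condition breaks the chain of strict inequalities and the diagonal term could fail to dominate uniquely.
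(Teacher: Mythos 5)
Your proof is correct and follows essentially the same route as the paper: both expand $a_k=\sum_{i+j=k}p_i\phi^i(q_j)$, use $v(q_j)\geq 0$ and $v(q_0)=v(q_d)=0$ to show the diagonal term uniquely achieves the minimum at each claimed vertex, and bound the remaining coefficients from below to preserve convexity. The only difference is presentational — you organize the convexity bookkeeping via supporting lines of a candidate polygon $g$, where the paper compares slope ratios between consecutive vertices directly — and your version is, if anything, slightly more careful about why no new vertices can appear.
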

\begin{proof}
Let $0 \leq i \leq i_0$. Then the coefficient of $T^i$ in $PQ$ is $c_i=\sum_{j=0}^i p_j\phi^j(q_{i-j})$. For $ 0 \leq j < i$, $v(p_j) \geq 0$, $v(q_{i-j}) \geq 0$, and  since $v(q_0) =0$, $v(c_i) \geq v(p_i)$. Moreover, assume that $(b^i, v(p_i))$ is a vertex of the Newton polygon of $P$. Then for all $0 \leq j \leq i$, $v(p_j) > v(p_i)$, so that $v(c_i) = v(p_i)$. Moreover, if $(b^i, v(p_i))$ and $(b^{i'}, v(p_{i'}))$ are two consecutive vertices of the Newton polygon of $P$, then for $i \leq j \leq i'$, $\frac{v(p_j) -  v(p_{i'})}{b^j - b^{i'}}\geq \frac{v(p_i) - v(p_{i'})}{b^i - b^{i'}}$. Thus, $\frac{v(c_j) -  v(c_{i'})}{b^j - b^{i'}}\geq \frac{v(c_i) - v(c_{i'})}{b^i - b^{i'}}$. This shows that, for $i \leq i_0$, the vertices of the Newton polygon of $PQ$ are the same as the vertices of the Newton polygon of $P$.\\
On the other hand, for $i \geq i_1$, $c_{i+d} = \sum_{j=i}^{i+d-j} p_j\phi^j(q_{i+d-j})$. This shows in particular that $v(c_{i_1+d}) = 0$, and more generally that $v(c_{i+d}) = v(p_i)$ whenever $(b^i, v(p_i))$ is a vertex of the Newton polygon of $P$. Then a calculation similar to the previous one shows that the $(b^{i+d}, v(p_i))$ for the $i \geq i_1$ such that $(b^i, v(p_i))$ is a vertex of the Newton polygon of $P$, are indeed vertices of the Newton polygon of $PQ$.\\
Finally, since $v(c_{i_0}) = v(c_{i_1+d}) = 0$, there are no other vertices in the Newton polygon of $PQ$.
\end{proof}
\begin{proposition}\label{prop:slopes-prod}
Let $P,Q \in K[T,\phi]$. Let $\mu_1< \cdots < \mu_r$ be the slopes of $P$. Assume that $Q$ is monic and monoclinic of slope $\mu$ and degree $d$, so that $\mu = -\frac{s}{b^d-1}$. Let $r_0$ be such that $\mu_{r_0} \leq \mu < \mu_{r_0+1}$. Then the slopes of $PQ$ are $\mu_1+s< \cdots < \mu_{r_0}+s \leq \mu< b^{-d}\mu_{r_0+1}< \cdots < b^{-d}\mu_r$.
\end{proposition}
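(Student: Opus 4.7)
The plan is to reduce to Lemma \ref{lem:slopes-prod} by twisting $Q$ so as to make its slope equal to $0$. The key algebraic identity is $\mu - b^d\mu = s$, equivalently $b^d\mu = \mu - s$.

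First I would set $Q^\dagger = u^{b^d\mu}Qu^{-\mu}$ and verify that it satisfies the hypotheses of Lemma \ref{lem:slopes-prod}. Using the commutation rule $T^iu^\alpha = u^{b^i\alpha}T^i$, the coefficient of $T^i$ in $Q^\dagger$ equals $q_iu^{(b^d-b^i)\mu}$, of valuation $v(q_i)+(b^d-b^i)\mu$; the monoclinic-slope-$\mu$ hypothesis on $Q$ says exactly that this is nonnegative for every $i$, with equality at $i=0$ and $i=d$, and $Q^\dagger$ is monic by construction. Since the exponents $(b^d-b^i)\mu$ are rational but not always integral, the construction takes place in $k(\!(u^{1/(b^d-1)})\!)[T,\phi]$, but the identity $PQ = \tilde P\cdot Q^\dagger\cdot u^\mu$ with $\tilde P = Pu^{-b^d\mu}$ will hold in $K[T,\phi]$ because the shears cancel.

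Next, I would study $\tilde P$. Right-multiplication by $u^{-b^d\mu}$ shears the Newton polygon of $P$ by $-b^d\mu$, so the slopes of $\tilde P$ are $\mu_i-b^d\mu = (\mu_i+s)-\mu$. A harmless left-multiplication by a power of $u$ then normalizes $\min_i v(\tilde p_i)=0$ without changing any slope. With this normalization the flat-bottom index interval $[i_0,i_1]$ of Lemma \ref{lem:slopes-prod} consists of the indices where the shear has turned a slope of $P$ into a slope $0$ of $\tilde P$, i.e., where $\mu_i+s=\mu$, so that $i\leq i_0$ indexes the slopes of $P$ on the side $\mu_i+s\leq \mu$ while $i\geq i_1$ indexes the remaining slopes. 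Applying the lemma, the Newton polygon of $\tilde PQ^\dagger$ decomposes into the left-hand vertices of $\tilde P$ (unchanged), a middle edge of slope $0$ with multiplicity $i_1-i_0+d$, and the right-hand vertices of $\tilde P$ translated rightward by $d$ columns, which divides the associated slopes by $b^d$.

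Finally, right-multiplication by $u^\mu$ shears this polygon by $+\mu$, sending the left-hand slopes $\mu_i-b^d\mu$ to $\mu_i+s$, the middle edge to slope $\mu$, and the right-hand slopes $b^{-d}(\mu_i-b^d\mu)=b^{-d}\mu_i-\mu$ to $b^{-d}\mu_i$. Collecting these three groups produces the list announced for $PQ$. The main technical annoyance is the bookkeeping of fractional powers of $u$ in the intermediate construction of $Q^\dagger$ and $\tilde P$: the cleanest way to handle this is to perform the entire calculation in $k(\!(u^{1/(b^d-1)})\!)[T,\phi]$ and observe at the end that the final identity descends to $K[T,\phi]$ because the shears applied on either side of $PQ$ are inverse to each other.
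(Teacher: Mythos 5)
Your proposal is correct and follows essentially the same route as the paper: the paper's proof likewise applies Lemma \ref{lem:slopes-prod} to $Pu^{-b^d\mu}$ and $u^{b^d\mu}Qu^{-\mu}$ and then shears back by right-multiplication by $u^\mu$, noting that the intermediate fractional powers of $u$ are harmless since slopes can be read off in any extension of $K$. Your version simply spells out the verification of the lemma's hypotheses and the bookkeeping of the cutoff index in more detail.
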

\begin{proof}
Multiplying by $u^{-\mu}$ on the right translates the slopes by $-\mu$, so the result on the slopes follows from applying Lemma \ref{lem:slopes-prod} to $Pu^{-b^d \mu}$ and $u^{b^d \mu}Qu^{- \mu}$ (note that slopes may be computed in any extension of $K$ as they are defined by the Newton polygon).\\
Indeed, the lemma shows that the smallest slopes of $Pu^{-b^d \mu} (u^{b^d \mu} Q u^{-mu} $ are the $\mu_i - b^d \mu$, and its highest slopes are the $(\mu_i - b^d)/b^d$, with one middle slope equal to zero. Multiplying again on the right by $u^\mu$ allows us to recover the slopes of $PQ$, and since $(b^d - 1)\mu = -s$, we get the prescribed slopes.
\end{proof}

\section{Irreducible skew polynomials}
In this section, we use the results about Newton polygons of skew polynomials to give a description of the irreducible elements of $K[T,\phi]$.
\begin{definition}
Let $\mu \in \Q^\times$. Write $\mu = b^\alpha \frac{s}{t}$, with $\alpha \in \Z$, and the two integers $s \in \Z$ and $t\in \Z\setminus \{0\}$ coprime, and $t$ coprime to $b$. Then the $b$-length of $\mu$, denoted by $\ell_b(\mu)$, is the order of $b$ modulo $t$ (or $0$ if $t = 1$).
\end{definition}
\begin{remark}
By definition, when $\mu \in \Z_{(b)}$, $\ell_b(\mu)$ is the smallest integer $\ell$ such that $\mu(b^\ell -1)  \in \Z$. Every such integer is a multiple of $\ell_b(\mu)$.
\end{remark}
\begin{lemma}\label{lem:rightfactor}
Let $\mu$ be the smallest slope of $A \in K[T,\phi]$. Let $P_\mu$ be the $\mu$-reduction of $A$. Let $P$ be a right divisor of $P_\mu$ in $k[T^{\ell_b(\mu)}, \sigma^{\ell_b(\mu)}]$. Then there exists $Q_\mu \in K[T,\phi]$ such that $Q_\mu$ has slope $\mu$ and $\mu$-reduction $P$, and such that $Q_\mu$ is a right-divisor of $A$.
\end{lemma}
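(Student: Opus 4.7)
The plan is a Hensel-style iteration: I start from a crude lift of $P$ to $\kubr[T,\phi]$ and correct it one $u$-adic order at a time until it right-divides $A$ exactly. The contraction that drives the iteration comes precisely from the hypothesis $b\geq 2$ on $\phi$.

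I would first normalize. Substituting $A\leftarrow u^\nu A u^{-\mu}$ (passing to a tamely ramified extension $K'\supset K$ if $\mu\notin\Z$), I reduce to the case $\mu=0$, $A\in\kubr[T,\phi]$, and $\bar A = A\bmod u = P_\mu$. Factoring out a power $T^{i_0}$ from the right of $A$ if necessary, I may additionally assume that $P_\mu$ has nonzero constant term. By hypothesis $P_\mu=\bar B P$ for some $\bar B\in k[T,\sigma]$; then the constant term $\bar c_0$ of $\bar B$ is a unit in $k$. Choose lifts $Q^{(0)},C^{(0)}\in\kubr[T,\phi]$ of $P$ and $\bar B$; automatically $R^{(0)}:=A-C^{(0)}Q^{(0)}\in u\kubr[T,\phi]$.

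Suppose inductively that $Q^{(n)},C^{(n)}\in\kubr[T,\phi]$ satisfy $Q^{(n)}\equiv P\pmod u$ and $v(R^{(n)})\geq n+1$. I set $Q^{(n+1)}=Q^{(n)}+u^{n+1}D$ and $C^{(n+1)}=C^{(n)}+u^{n+1}E$, with $\deg D<m:=\deg P$, and demand that $v(R^{(n+1)})\geq n+2$. The key computation is that, writing $C^{(n)}=\sum_i c_i^{(n)}T^i$,
\[
C^{(n)}\cdot u^{n+1}D=\sum_i c_i^{(n)}\,u^{b^i(n+1)}\,T^iD;
\]
for $i\geq 1$ the exponent $b^i(n+1)\geq 2(n+1)\geq n+2$ makes these terms negligible modulo $u^{n+2}$, so only the $i=0$ term $\bar c_0\,u^{n+1}D$ survives. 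The remainder condition $R^{(n+1)}\equiv 0\pmod{u^{n+2}}$ therefore reduces mod $u$ to the linear equation in $k[T,\sigma]$
\[
\bar c_0\,\bar D + \bar E\,P = \bar r^{(n)}, \qquad \deg\bar D<m,
\]
where $\bar r^{(n)}=(R^{(n)}/u^{n+1})\bmod u$, and this is solvable by the right-Euclidean division of $\bar c_0^{-1}\bar r^{(n)}$ by $P$.

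The iterates $Q^{(n)}$ then converge $u$-adically to a right divisor $Q$ of the normalized $A$ with $Q\equiv P\pmod u$, hence monic of degree $m$, slope $0$, and $0$-reduction $P$. Undoing the normalization produces $Q_\mu\in K'[T,\phi]$ of slope $\mu$ and $\mu$-reduction $P$ right-dividing $A$; since $P$ and $\bar B$ lie in the $\operatorname{Gal}(K'/K)$-fixed subring $k[T,\sigma]$, running the iteration Galois-equivariantly forces $Q_\mu$ to be defined over $K$. The main obstacle is the preparatory arrangement $\bar c_0\neq 0$: without first separating off the right factor $T^{i_0}$ of $P_\mu$, the equation driving each Hensel step has no solution with $\deg\bar D<m$ and the iteration stalls.
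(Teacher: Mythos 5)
Your proof is correct and is essentially the argument the paper gives: both run a Hensel-type iteration lifting the factorization $P_\mu=\bar B P$ order by order in $u$, solve for the correction terms at each step via right-Euclidean division by $P$ in $k[T,\sigma]$ (your $\bar c_0\bar D+\bar E P=\bar r^{(n)}$ is the paper's $\bar R=MG_0+a_0N$), and obtain the contraction from the fact that $\phi^i(u^{n+1})=u^{b^i(n+1)}$ is negligible for $i\geq 1$ since $b\geq 2$, using crucially that the constant term of the left cofactor is a unit because $\mu$ is the smallest slope. Your added care about the normalization to $\mu=0$ (ramified extension and descent) and about a possible right factor $T^{i_0}$ only makes explicit points the paper's proof passes over.
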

\begin{proof}
We may assume that $\mu =0$. We show that there are sequences of skew polynomials $(F_j)_{j\geq 0}$, $(G_j)_{j\geq 0}$, and a sequence $v_j$ with $\lim v_j = + \infty$, such that the valuation of $A-F_jG_j$ is $\geq v_j$, and such that $v(G_j - P) >0$.\\
By hypothesis, there exists $F \in k[T, \sigma]$, $R\in \kubr[T,\phi]$ and $\nu >0$ such that $A - FP= u^\nu R > 0$. Then the induction hypothesis holds for $j=0$ with $F_0 = F$, $G_0 = P$, and $\nu_0= \nu$.\\
Now assume that such sequences have been constructed up to some value of $j \geq 0$. We may write $A - F_jG_j = u^{\nu_j}R_j$. Let $\bar R_j$ be the reduction modulo $u^{>0}$ of $R_j$. Let $\bar R = MG_0 + a_0N$ be the right euclidean division of $\bar R$ by $G_0$, where $a_0 \in k^\times$ is the constant coefficient of $F$ (it is nonzero because $\mu$ is the smallest slope of $A$). Then we have $(F_j + u^{\nu_j}M)(G_j + u^{\nu_j}N) = F_jG_j + u^{\nu_j}MG_j + u^{\nu_j}a_0N + O(u^{\nu_j+1})$. Thus, if we let $F_{j+1} = F_j + u^{\nu_j} M$ and $G_{j+1} = G_j + u^{\nu_j}N$, then $v(A - F_{j+1}G_{j+1}) \geq \nu_{j}+1$.\\
In particular, the sequences $(F_j)$ and $(G_j)$ are convergent, and their respective limits $F$ and $G$ are such that $A = FG$ and $v(Gu^{-\mu} - P) > 0$. In particular, $Q_\mu = G$ is a right-divisor of $A$ and has slope $\mu$ and $\mu$-reduction $P$.  
\end{proof}
\begin{theorem}\label{thm:classif-irred}
Let $A \in K[T,\phi]$. Then $A$ is irreducible if and only if:
\begin{itemize}
    \item $A$ is monoclinic of slope $\mu$, with $\ell_b(\mu) = \ell$
    \item the $\mu$-reduction of $A$ is irreducible in $k[T^\ell, \sigma^\ell]$.
\end{itemize}
\end{theorem}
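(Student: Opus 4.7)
The proof splits into two implications, both built around Lemma \ref{lem:rightfactor} and Proposition \ref{prop:slopes-prod}. For the forward direction ($A$ irreducible implies monoclinic with irreducible reduction), I argue by contraposition in two steps. If $A$ has more than one slope, let $\mu$ be its smallest slope, with multiplicity $m < \deg A$ in the Newton polygon; then $P_{A,\mu}$ has $T$-degree $m$, and Lemma \ref{lem:rightfactor} applied to $P = P_{A,\mu}$ itself (as a trivial right divisor of itself in $k[T^\ell, \sigma^\ell]$) produces a proper right divisor of $A$ of $T$-degree $m$, contradicting irreducibility. Assume henceforth $A$ is monoclinic of slope $\mu$, so that $\deg P_{A,\mu} = \deg A$. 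If $P_{A,\mu}$ were reducible, then Ore's theorem applied inside the skew polynomial ring $k[T^\ell, \sigma^\ell]$ would produce a proper right divisor $P$ of $P_{A,\mu}$, which the lemma lifts to a proper right divisor of $A$; again a contradiction.

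For the converse, assume $A$ is monoclinic of slope $\mu$ with $P_{A,\mu}$ irreducible in $k[T^\ell, \sigma^\ell]$, and suppose for contradiction that $A = BC$ with both $B, C$ non-units. Using Theorem \ref{thm:ore} to factor $C$ further, extract the rightmost irreducible right factor $C'$ of $C$ and absorb everything on its left into a single left factor, giving $A = B^* C'$ with both $B^*, C'$ non-units. By the forward direction applied to the irreducible $C'$, this $C'$ is monoclinic. Writing $C' = c_d \tilde C$ with $c_d$ its leading coefficient yields $A = \tilde B \tilde C$ with $\tilde B = B^* c_d$ a non-unit and $\tilde C$ monic, monoclinic of degree $d \geq 1$. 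Applying Proposition \ref{prop:slopes-prod} to this factorization, combined with the fact that the slope set of $A$ is the singleton $\{\mu\}$, forces through the enumeration there that $\tilde C$ has slope $\mu$ and that every slope of $\tilde B$ equals $b^d \mu$; hence $\tilde B$ is itself monoclinic, of slope $b^d \mu$.

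The key technical step is then to establish the multiplicativity of the reductions
\[ P_{A,\mu} \;=\; P_{\tilde B,\, b^d \mu} \,\cdot\, P_{\tilde C,\, \mu} \qquad \text{in } k[T^\ell, \sigma^\ell], \]
using $\ell_b(b^d \mu) = \ell_b(\mu) = \ell$ (multiplying $\mu = b^\alpha s/t$ by $b^d$ does not alter $t$). This is obtained by inserting the identity $u^{-b^d \mu} u^{b^d \mu}$ between $\tilde B$ and $\tilde C$ in the expression $u^{\nu_A} A u^{-\mu}$ and checking that the two resulting factors coincide on the nose with the normalizations used in the definitions of $P_{\tilde B, b^d \mu}$ and $P_{\tilde C, \mu}$; the only non-trivial check is the equality $\nu_A = b^d \mu - v(\tilde b_0)$, which follows from $a_0 = \tilde b_0 \tilde c_0$ together with the identity $v(\tilde c_0) = -\mu(b^d - 1)$ valid for the monic monoclinic polynomial $\tilde C$. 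Once this is done, irreducibility of $P_{A,\mu}$ in the skew polynomial domain $k[T^\ell, \sigma^\ell]$ (whose units are exactly $k^\times$) forces one of the two factor-reductions to lie in $k^\times$; since the $\mu$-reduction of a monoclinic polynomial preserves $T$-degree, this contradicts $\deg \tilde B \geq 1$ or $\deg \tilde C \geq 1$. The main obstacle is precisely this valuation bookkeeping; everything else reduces quickly to Lemma \ref{lem:rightfactor}, Proposition \ref{prop:slopes-prod}, and elementary manipulations inside $k[T^\ell, \sigma^\ell]$.
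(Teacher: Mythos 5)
Your proof is correct and follows essentially the same route as the paper: the forward direction is exactly the paper's appeal to Lemma \ref{lem:rightfactor}, and the converse rests on the same device of inserting a power of $u$ and its inverse between the two factors so that the $\mu$-reduction of $A$ becomes the product of the suitably normalized reductions of the factors, whence irreducibility in $k[T^\ell,\sigma^\ell]$ forces one factor to be trivial. You simply carry out the converse in more detail — first reducing to a monic monoclinic right factor via Theorem \ref{thm:ore} and Proposition \ref{prop:slopes-prod}, and checking the valuation identity $v(\tilde c_0)=-\mu(b^d-1)$ explicitly — where the paper applies the same insertion argument directly to an arbitrary factorization $A=A_1A_2$ and concludes by a degree count.
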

\begin{proof} By Lemma \ref{lem:rightfactor}, if $A$ has more than one slope or is monoclinic but has a $\mu$-reduction that is reducible, then $A$ is reducible. Conversely, assume that $A$ is monoclinic of slope $\mu$ and that its $\mu$-reduction is irreducible. Assume $A = A_1A_2$, then $u^{\nu}Au^{-\mu} = (u^\nu A_1u^{-\nu_2}) (u^{\nu_2} A_2u^{-\mu})$, where $\nu_2$ is such that $ (u^{\nu_2} A_2u^{-\mu})$ is the reduction of $A_2$. Then the irreducibility of the $\mu$-reduction of $A$ shows that either $A_1$ or $A_2$ is constant, which proves the irreducibility of $A$.
\end{proof}

Since the irreducible factors appearing in the factorization of a skew polynomial $A \in K[T,\phi]$ are determined up to similarity, we want to describe the similarity classes of irreducible skew polynomials. In terms of $\phi$-modules, this amounts to describing the isomorphism classes of simple objects in $\phimod$, and giving the similarity classes of the irreducible factors of a skew polynomial amounts to giving the semi-simplification of the corresponding $\phi$-module. Note that in general, the fact that $P$ is irreducible and appears in a factorization of $A$ does not guarantee that $A$ has a right-factor similar to $P$ (whereas this is indeed the case over a finite field, see \cite{zbMATH06680379}, Lemma 2.1.18).

\section{Similarity classes of irreducible skew polynomials}
The aim of this section is to describe the similarity classes of irreducible elements in $K[T,\phi]$. The slopes of a skew polynomial are not invariant by similarity: indeed, if $P \in K[T,\phi]$, then left-multiplication by $T$ followed by right-division by $T$ yield a polynomial similar to $P$ whose coefficients have had all their valuations mutiplied by $b$, so the same holds for its slopes. Similarly, multiplication by $u^m$ turns $P$ into a skew polynomial similar to $P$, and its slopes are the slopes of $P$ translated by $m$. This leads to the following defintions.

\begin{definition}
The rational numbers $\mu, \mu'$ are equivalent if $\ell_b(\mu) = \ell_b(\mu') = \ell$ and there exists an integer $0 \leq i \leq \ell$ such that $\mu - b^i \mu' \in \Z$.
\end{definition}
\begin{remark}
Alternatively, two rational numbers $\mu_1$ and $\mu_2$ are equivalent if and only if they have the same $b$-length $\ell$, and $(b^\ell -1)\mu_1$ and $(b^\ell -1)\mu_1$ have the same digits when they are written in base $b$.
\end{remark}

\begin{definition}
Let $\mu \in \Q^\times$ with $\ell_b(\mu) = \ell$ and let $P\in k[T^\ell, \sigma^\ell]$. Then $P^{[\mu]} = u^{-\mu}Pu^\mu \in K[T,\phi]$.
\end{definition}

\begin{lemma}
Let $\mu \in \Q^\times$ with $\ell_b(\mu) = \ell$ and $P\in k[T^\ell, \sigma^\ell]$, the skew polynomial $P^{[\mu]}$ has slope $\mu$ and $\mu$-reduction $P$.
\end{lemma}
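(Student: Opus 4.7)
The plan is a direct computation. I would write $P = \sum_{j=0}^n c_j T^{\ell j}$ with $c_j \in k$ and some $c_j \neq 0$, and then conjugate each term using the basic commutation $T^i u^\mu = \phi^i(u^\mu) T^i = u^{b^i \mu} T^i$. This yields
\[
P^{[\mu]} = u^{-\mu} P u^{\mu} = \sum_{j=0}^n c_j\, u^{(b^{\ell j}-1)\mu}\, T^{\ell j}.
\]
The only nonformal check is that each exponent $(b^{\ell j}-1)\mu$ is an integer. This is exactly the content of the remark following the definition of $\ell_b$: since $\ell = \ell_b(\mu)$ divides $\ell j$, one has $(b^{\ell j}-1)\mu \in \Z$, so $P^{[\mu]}$ genuinely belongs to $K[T,\phi]$.

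Next, I would read off the Newton polygon from the explicit formula above. The nonzero terms sit at $T^{\ell j}$ with $c_j \neq 0$, and the associated Newton-polygon points $(b^{\ell j},\, (b^{\ell j}-1)\mu)$ all lie on the single line $y = \mu(x-1)$. Hence the Newton polygon of $P^{[\mu]}$ is a single segment of slope $\mu$, so $P^{[\mu]}$ is monoclinic of slope $\mu$.

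For the $\mu$-reduction, I would compute
\[
u^{\mu} P^{[\mu]} u^{-\mu} = u^{\mu}\bigl(u^{-\mu} P u^{\mu}\bigr) u^{-\mu} = P.
\]
Since some $c_j$ lies in $k^\times$, the skew polynomial $P$ has valuation $0$, so $\nu = \mu$ is the unique rational making $u^{\nu} P^{[\mu]} u^{-\mu}$ of valuation $0$. Its reduction modulo the ideal of skew polynomials of positive valuation is $P$ itself, all its coefficients already lying in $k$. This identifies the $\mu$-reduction of $P^{[\mu]}$ with $P$.

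There is essentially no obstacle: the argument is a short piece of bookkeeping built on the single commutation identity $T^i u^\mu = u^{b^i \mu} T^i$, and its only delicate ingredient is the integrality $(b^{\ell j}-1)\mu \in \Z$, which is precisely how the $b$-length $\ell_b(\mu)$ was set up.
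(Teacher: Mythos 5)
Your computation is correct and is precisely the "direct computation from the definition" that the paper's one-line proof alludes to: conjugation gives $P^{[\mu]} = \sum_j c_j u^{(b^{\ell j}-1)\mu} T^{\ell j}$, the integrality of the exponents is exactly the remark on $\ell_b$, the points $(b^{\ell j},(b^{\ell j}-1)\mu)$ lie on one line of slope $\mu$, and undoing the conjugation recovers $P$ as the $\mu$-reduction. No issues.
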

\begin{proof}
The $\mu$-reduction can be computed directly from the definition of $P^{[\mu]}$.
\end{proof}

\begin{lemma}\label{lem:pmu}
Let $P \in K[T,\phi]$ be monic {\'e}tale with integral coefficients. Let $v_0 \geq 0$ be the valuation of the constant coefficient of $P$. Let $Q \in K[T,\phi]$ monic and assume that $v(P-Q)>bv_0/(b-1)$. Then $P$ and $Q$ are similar.
\end{lemma}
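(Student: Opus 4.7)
The plan is to realize an isomorphism of $\phi$-modules $\psi: D_P \to D_Q$, where $D_P = K[T,\phi]/K[T,\phi]P$ and $D_Q = K[T,\phi]/K[T,\phi]Q$. Since such a morphism is determined by the image $y := \psi(\bar 1) \in D_Q$ of the canonical generator, and the defining relation $P \cdot \bar 1 = 0$ in $D_P$ translates to the constraint $P \cdot y = 0$ in $D_Q$, it suffices to find $y \in D_Q$ with $P \cdot y = 0$ such that $y$ generates $D_Q$ as a $\phi$-module. Throughout, I endow $D_Q$ with the valuation coming from the basis $(\bar 1, \overline{T}, \ldots, \overline{T^{n-1}})$, so that $v(\overline{\sum r_i T^i}) = \min_i v(r_i)$ on representatives of degree $< n$. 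Note that the hypothesis forces $v(q_0) = v_0$, so $Q$ is étale as well.

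The key technical input is that $\phi_D$ multiplies valuations by at least $b$ on $D_Q$. For $\overline{r}$ with $\deg r < n-1$, this is immediate from $\phi_D(\overline r) = \overline{\phi(r) T}$. When $\deg r = n-1$, the reduction of $\overline{T^n}$ against the integral polynomial $Q$ introduces a correction $-\phi(r_{n-1}) \sum q_j \overline{T^j}$, but each resulting coefficient still has valuation at least $b\, v(\overline r)$ because $v(q_j) \geq 0$. Iterating yields $v(\phi_D^i(x)) \geq b^i v(x)$.

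I construct $y$ by successive approximation, starting from $y_0 = \bar 1$. Setting $\epsilon_j := P \cdot y_j$, we have $\epsilon_0 = (P - Q) \cdot \bar 1$, so $\nu_0 := v(\epsilon_0) > bv_0/(b-1)$. Inductively, define $y_{j+1} = y_j - p_0^{-1} \epsilon_j$. Using semilinearity, the $i = 0$ term of $P \cdot (-p_0^{-1}\epsilon_j) = -\sum_i (p_i/\phi^i(p_0))\,\phi_D^i(\epsilon_j)$ is exactly $-\epsilon_j$, while the remaining terms give $\epsilon_{j+1} = -\sum_{i=1}^n (p_i/\phi^i(p_0))\, \phi_D^i(\epsilon_j)$. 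Bounding $v(p_i/\phi^i(p_0)) \geq -b^i v_0$ and combining with $v(\phi_D^i(\epsilon_j)) \geq b^i \nu_j$, we obtain $\nu_{j+1} \geq b(\nu_j - v_0)$. Substituting $\mu_j = \nu_j - bv_0/(b-1)$ turns this into $\mu_{j+1} \geq b\, \mu_j$; since $\mu_0 > 0$ by hypothesis and $b \geq 2$, we conclude $\nu_j \to \infty$. The sequence $(y_j)$ therefore converges in $D_Q$ to some $y$ with $P \cdot y = 0$ and $v(y - \bar 1) > 0$.

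Finally, the morphism $\psi: D_P \to D_Q$ sends $\overline{T^i} \mapsto \phi_D^i(y) = \overline{T^i} + \phi_D^i(y - \bar 1)$, so its matrix in the standard bases is $I + N$ with $N$ of positive valuation --- hence invertible over $K$. Thus $\psi$ is an isomorphism and $P$ and $Q$ are similar. The main obstacle is calibrating the iteration in the semilinear setting: one must keep track of the $\phi$-twists appearing in $p_0^{-1}$, verify that reduction modulo $Q$ does not break the basic estimate $v(\phi_D(x)) \geq b\, v(x)$, and observe that the hypothesis $v(P-Q) > bv_0/(b-1)$ is precisely tuned so that the contraction $\mu_{j+1} \geq b\,\mu_j$ starts from $\mu_0 > 0$.
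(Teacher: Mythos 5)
Your proof is correct and follows essentially the same route as the paper: a $u$-adic successive-approximation (fixed-point) argument whose contraction comes from $\phi$ multiplying valuations by $b$, with the threshold $bv_0/(b-1)$ entering in exactly the same way (the paper's initial estimate $v(C_QC_P^{-1}-I_d)>v_0/(b-1)$ matches your $v(y_1-y_0)=\nu_0-v_0>v_0/(b-1)$). The only difference is packaging: the paper iterates on the change-of-basis matrix via $M_{n+1}=C_Q\phi(M_n)C_P^{-1}$ starting from $I_d$, whereas you iterate on a single cyclic vector $y\in D_Q$ annihilated by $P$; both yield the same isomorphism $D_P\simeq D_Q$.
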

\begin{proof}
Let $C_P$ (resp. $C_Q$) be the companion matrix of $P$ (resp. of $Q$). Let $M_0 = I_d$ and define the sequence $(M_n)_{n \geq 0}$ inductively by $M_{n+1} = C_Q\phi(M)C_P^{-1}$. Since $v(\det C_P) = v_0$, the cofactor matrix formula shows that $v(C_QC_P^{-1} - I_d) > v_0/(b-1)$, \emph{i.e.} $v(M_1-M_0)>v_0/(b-1)$. Now assume that for some $n \in \N^{*}$, $v(M_n - M_{n-1})  = v > \frac{v_0}{b-1}$. Then $M_{n+1} - M_n = C_Q\phi(M_n - M_{n-1})C_{P}^{-1}$, and therefore $v(M_{n+1} - M_n) \geq bv - v_0 > v$. In particular, the sequence $(M_n)_{n \in \N}$ is convergent. Let $M$ be the limit of this sequence, then $M$ is nonsingular because it is equal to $I_n$ modulo $u$, and $C_P = M^{-1}C_Q\phi(M)$. Thus, $P$ and $Q$ are similar.
\end{proof}
\begin{lemma} \label{lem:p-sim-pmu} Let $P \in K[T,\phi]$ be monoclinic of slope $\mu$. Let $\bar{P}$ be its $\mu$-reduction. Then $P$ is similar to $\bar P^{[\mu]}$.
\end{lemma}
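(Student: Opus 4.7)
The plan is to apply Lemma~\ref{lem:pmu} to $P$ and a monic normalization of $\bar P^{[\mu]}$. Let $d=\deg P$ and set $Q = u^{-(b^d-1)\mu}\bar P^{[\mu]}$; then $Q$ is monic of degree $d$ and generates the same left ideal as $\bar P^{[\mu]}$ (left-multiplication by a scalar in $K^\times$ is invertible), hence is similar to $\bar P^{[\mu]}$. Both $P$ and $Q$ are then monic of degree $d$, monoclinic of slope $\mu$, and share the same $\mu$-reduction $\bar P$; in particular, the coefficient of $T^i$ in $P-Q$ has valuation strictly above the slope-$\mu$ line for every $i<d$, and $C_P$ and $C_Q$ agree everywhere except in the last column.

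I would then run the Newton iteration used in the proof of Lemma~\ref{lem:pmu}: put $M_0 = I_d$ and $M_{n+1} = C_Q\phi(M_n)C_P^{-1}$, where $C_P, C_Q$ are the companion matrices. All iterates stay in $M_d(K)$, so any fixed point $M\in GL_d(K)$ satisfies $MC_P = C_Q\phi(M)$ and gives $P\sim Q\sim \bar P^{[\mu]}$.

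The main obstacle is to verify convergence. When $|\mu|(b^d-1)$ is large, the valuation $v_0 = -\mu(b^d-1)$ of the constant coefficient of $P$ is large, so the direct hypothesis $v(P-Q)>bv_0/(b-1)$ of Lemma~\ref{lem:pmu} need not follow from the structural data alone. I would overcome this by either (a) refining the analysis of the iteration, exploiting that $C_Q - C_P$ is supported only in the last column so that the full loss $v_0$ coming from $C_P^{-1}$ is not applied to the errors $M_{n+1}-M_n$ at each step (the error valuations may remain constant for a few stabilizing iterations before geometric growth takes over), or (b) preconditioning by a diagonal change of basis with entries $u^{\alpha_i}$, $\alpha_i$ proportional to $\mu b^i$, possibly in the ramified extension $k(\!(u^{1/(b^\ell-1)})\!)$ where $\ell=\ell_b(\mu)$ if the required exponents are fractional. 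After option (b), the comparison reduces to monic slope-$0$ polynomials with identical mod-$u$ reduction $\bar P$, for which Lemma~\ref{lem:pmu} applies trivially, and the similarity descends to $K$ since the Newton iteration itself is defined over $K$ and its limit must lie in $GL_d(K)$.
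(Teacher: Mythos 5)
You have correctly identified the crux---that $v(P-\bar P^{[\mu]})$ need not exceed the threshold $bv_0/(b-1)$ of Lemma~\ref{lem:pmu} when $v_0=-\mu(b^d-1)$ is large---but neither of your proposed repairs is actually carried out, and this is where the proof is genuinely incomplete. For option (a): the structural data only give $v(P-Q)>-\mu(b^{d-1})(b-1)$ at worst (the binding coefficient is the one of $T^{d-1}$), while a single multiplication by $C_P^{-1}$ can cost the full $v_0=-\mu(b^d-1)$; the resulting bound on $v(M_1-M_0)$ is already negative for $d\geq 2$, so there is no reason the error valuations ``stabilize before geometric growth takes over,'' and you give no mechanism forcing the loss to be less than $v_0$ per step. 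For option (b): the preconditioning $\Delta=\mathrm{diag}(u^{\mu b^i})$ does reduce to the slope-$0$ case, but then the iteration and its limit $M'$ live over $k(\!(u^{1/(b^\ell-1)})\!)$, and undoing the conjugation gives $M=\Delta M'\Delta^{-1}$ whose entries carry exponents $\mu(b^i-b^j)\notin\Z$; your assertion that ``the Newton iteration itself is defined over $K$'' contradicts the preconditioning, and the descent of the similarity from the ramified extension back to $K$ is a nontrivial claim that you neither state precisely nor prove (nor is the extension even obviously separable or $\phi$-stable without checking $\gcd(b^\ell-1,p)$).

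The paper avoids this entirely by changing the iteration. It uses Lemma~\ref{lem:pmu} only for a mild normalization (arranging that the constant coefficient of $P$ is exactly the monomial $\lambda_0u^s$, so that $P$ and $\bar P^{[\mu]}$ have \emph{equal} constant coefficients), and then works inside the cyclic $\phi$-module $D$ attached to $P$: it iterates $x_{n+1}=\bar P^{[\mu]}(\phi)(x_n)+x_n$ starting from the cyclic generator, where the differences satisfy a recursion governed by $\bar P^{[\mu]}-P$, whose constant term now vanishes; the contraction therefore comes for free and the limit $x_\infty$ is annihilated by $\bar P^{[\mu]}(\phi)$, exhibiting the similarity. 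If you want to keep your companion-matrix framework, you would at minimum need to prove a sharpened version of Lemma~\ref{lem:pmu} exploiting that $C_Q-C_P$ is concentrated in the last column \emph{and} that its bottom entries are the ones with large valuation, or else supply the missing descent argument for option (b); as written, the proof does not close.
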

\begin{proof}
Let $Q = \bar P^{[\mu]} \in K[T,\phi]$ since $P$ is monoclinic of slope $\mu$. Up to left multiplication of $P$ by a constant, we may assume that $v(P - \bar P) >0$. Up to right mutliplication by a constant, we may assume that $\mu \leq 0$. By Lemma \ref{lem:pmu}, we may assume that the constant coefficient of $P$ is of the form $\lambda_0 u^s$, and in particular $P$ and $\bar P^{[\mu]}$ have the same constant coefficient.\\
Now let $D = K[T,\phi]/PK[T,\phi]$. This $\phi$-module is endowed with its canonical basis $(x, \phi(x), \dots, \phi^{d-1}(x))$, so that $P(\phi)(x) = 0$. Let $x_0 = x$, and define by induction $x_{n+1} = \bar P^{[\mu]} (\phi)(x_n) + x_n$. For $n \in \N^*$, $x_{n+1} - x_n = (\bar P^{[\mu]} - P)(\phi)(x_n - x_{n-1})$. Since $P$ and $\bar P^{[\mu]}$ have the same constant coefficient, this shows that the sequence $(x_n)_{n \in \N}$ is convergent. Its limit $x_\infty$ is such that $x_{\infty} = \bar P^{[\mu]} (\phi)(x_\infty) + x_\infty$, so that $\bar P^{[\mu]}(\phi)(x_\infty) = 0$. Therefore, $\bar P^{[\mu]}$ is similar to $P$.
\end{proof}

\begin{corollary}
The slopes of a product of skew polynomials, counted with multiplicities, are up to equivalence the slopes of the factors, counted with multiplicities.
\end{corollary}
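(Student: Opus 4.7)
The plan is to induct on the number of factors, reducing to the two-factor case $A = PQ$ and then applying Proposition \ref{prop:slopes-prod}. The one-factor case is trivial; for the inductive step, once the claim is known for $P$, it suffices to handle $A = PQ$. To match the hypotheses of Proposition \ref{prop:slopes-prod}, I factor $Q$ into irreducibles via Theorem \ref{thm:ore}; each irreducible factor is monoclinic by Theorem \ref{thm:classif-irred}, so by associativity I may assume $Q$ is itself a single irreducible, which after normalization by a left scalar (shifting the Newton polygon vertically without changing its slopes) is monic monoclinic of slope $\mu_Q = -s/(b^d-1)$ and degree $d$, with $s = v(q_0) - v(q_d) \in \Z$.

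Proposition \ref{prop:slopes-prod}, together with the multiplicity calculation of Lemma \ref{lem:slopes-prod}, then describes the slope multiset of $PQ$ explicitly: each slope $\mu_i \leq \mu_Q$ of $P$ is translated to $\mu_i + s$ with unchanged multiplicity, each slope $\mu_i > \mu_Q$ is rescaled to $b^{-d}\mu_i$ with unchanged multiplicity, and $\mu_Q$ is adjoined with multiplicity $d$. The corollary then reduces to checking that the operations $\mu \mapsto \mu + s$ and $\mu \mapsto b^{-d}\mu$ both preserve the equivalence class of each slope. The translation case is immediate from the definition via $i = 0$, since $s \in \Z$. The rescaling case preserves $b$-length (only the $b^\alpha$ factor in the decomposition $\mu = b^\alpha s/t$ shifts), and via the digit characterization in the remark following the definition of equivalence, $(b^\ell - 1)\mu$ and $(b^\ell - 1)b^{-d}\mu$ yield the same base-$b$ digit pattern, so the two slopes are equivalent.

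The main obstacle will be this second equivalence check: when $\ell_b(\mu) \nmid d$, one cannot directly take $i = d$ in the bounded-range form of the definition, and the verification has to go through the digit characterization instead, carefully tracking how the $b^\alpha$ shift interacts with the denominator $t$ and reducing $d$ modulo $\ell_b(\mu)$. Once this is in place, the adjoined slope $\mu_Q$ with multiplicity $d$ matches the contribution of $Q$'s own irreducible factorization, and the induction closes, yielding the multiset identity of equivalence classes of slopes.
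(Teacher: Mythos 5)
Your proof is correct and follows essentially the same route as the paper's: both reduce to the case of a monoclinic right factor, invoke Proposition \ref{prop:slopes-prod}, and conclude by observing that $\mu \mapsto \mu + s$ and $\mu \mapsto b^{-d}\mu$ preserve equivalence classes of slopes. The only (harmless) difference is that you obtain monoclinic right factors by refining to an irreducible factorization via Theorem \ref{thm:ore} and Theorem \ref{thm:classif-irred}, whereas the paper inducts on the number of slopes of $Q$ and peels off a monoclinic right factor directly; your explicit treatment of the equivalence check when $\ell_b(\mu) \nmid d$ addresses a point the paper asserts without justification.
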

\begin{proof}
Let $P,Q \in K[T,\phi]$. Let us show induction on the number of slopes of $Q$ that the slopes of $PQ$ are, up to equivalence, the slopes of $P$ and $Q$. Assume $Q$ is monclinic, then the result follows from Proposition \ref{prop:slopes-prod} (since $\mu_i + s$ and $\mu_i/b^d$ are equivalent to $\mu_i)$. Suppose the result holds when $Q$ has $m$ slopes, and suppose now that $Q$ has $m+1$ slopes. Then $Q$ may be factored as $Q = Q_1 Q_\mu$ with $Q_\mu$ monoclinic of slope $\mu$. By induction hypothesis, $Q_1$ has $m$ slopes, the slopes of $Q$ are the slopes of $Q_1$ and $\mu$, and the slopes of $PQ_1$ are the slopes of $P$ and $Q_1$ (up to equivalence). Thus, the results holds again as consquence of the case of monoclinic $Q$.
\end{proof}

\begin{proposition}\label{prop:irred-pmu}
Every irreductible element of $K[T,\phi]$ is similar to an element of the form $P^{[\mu]}$ with $P \in k[T^ \ell,\sigma^\ell]$ irreducible, with $\ell = \ell_b(\mu)$.
\end{proposition}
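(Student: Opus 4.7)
The plan is to combine Theorem \ref{thm:classif-irred} and Lemma \ref{lem:p-sim-pmu} in essentially one step. Given an irreducible $A \in K[T,\phi]$, Theorem \ref{thm:classif-irred} asserts that $A$ is automatically monoclinic of some slope $\mu$ with $\ell_b(\mu) = \ell$, and that its $\mu$-reduction $\bar{A}$ lives in $k[T^\ell, \sigma^\ell]$ and is irreducible there. Thus the data $(\mu, \bar{A})$ needed to form $\bar{A}^{[\mu]}$ is produced for free by the classification of irreducibles.

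The second step is to invoke Lemma \ref{lem:p-sim-pmu}, which applies to any monoclinic polynomial of slope $\mu$ with reduction $\bar{A}$, giving a similarity $A \sim \bar{A}^{[\mu]}$. Setting $P = \bar{A}$ yields the desired presentation: $A$ is similar to $P^{[\mu]}$ with $P \in k[T^\ell, \sigma^\ell]$ irreducible and $\ell = \ell_b(\mu)$.

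Since both ingredients are already proved in the excerpt, there is essentially no obstacle. The only point that requires a moment's thought is making sure the $\mu$ that appears in Theorem \ref{thm:classif-irred} is the same $\mu$ used to form $\bar{A}^{[\mu]}$ in Lemma \ref{lem:p-sim-pmu} (i.e.\ that the $\ell_b(\mu) = \ell$ matching is consistent between the two statements); but this is tautological, since $\bar{A}$ is defined exactly as the $\mu$-reduction of $A$. So the proof is a two-line invocation, and I would write it as: \emph{By Theorem \ref{thm:classif-irred}, $A$ is monoclinic of some slope $\mu$, with $\ell_b(\mu) = \ell$, and its $\mu$-reduction $\bar A$ is irreducible in $k[T^\ell, \sigma^\ell]$; by Lemma \ref{lem:p-sim-pmu}, $A$ is similar to $\bar A^{[\mu]}$, which is of the required form.}
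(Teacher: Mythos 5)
Your proof is correct and is essentially identical to the paper's own argument: the paper likewise deduces monoclinicity of slope $\mu$ and irreducibility of the $\mu$-reduction from Theorem \ref{thm:classif-irred}, then applies Lemma \ref{lem:p-sim-pmu} to conclude similarity with $\bar{A}^{[\mu]}$. No further comment is needed.
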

\begin{proof}
Let $P \in k[T,\phi]$ be irreducible. By Theorem \ref{thm:classif-irred}, $P$ is monoclinic of slope $\mu$ with $\ell_b(\mu) = \ell$, and its $\mu$-reduction $\bar P$ is irreducible in $k[T^\ell, \sigma^\ell]$.\\
By Lemma \ref{lem:p-sim-pmu}, $P$ is similar to $\bar P u^\mu$. Since $\bar P \in k[T^\ell, \sigma^\ell]$, $P$ is indeed of the prescribed form.
\end{proof}

\begin{proposition}\label{prop:p1p2}
The irreducible skew polynomials $P_1^{[\mu_1]}$ and $P_2^{[\mu_2]}$ are similar if and only if $\mu_1 \sim \mu_2$ and $P_1 \sim_{k[T^\ell, \sigma^\ell]} P_2$.
\end{proposition}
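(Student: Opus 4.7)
The proof splits along the biconditional, building on invariants developed earlier in the paper. For the \emph{only if} direction, I would invoke the implicit consequence of the preceding corollary that similar skew polynomials share the same multiset of slopes up to the equivalence $\sim$; since $P_i^{[\mu_i]}$ is monoclinic of slope $\mu_i$ (immediate from the definition of $P^{[\mu]}$), this yields $\mu_1 \sim \mu_2$ at once. For the descent to $P_1 \sim_{k[T^\ell, \sigma^\ell]} P_2$, the plan is to note that the $\mu_i$-reduction of $P_i^{[\mu_i]}$ is exactly $P_i$, and to upgrade an isomorphism $D_{P_1^{[\mu_1]}} \simeq D_{P_2^{[\mu_2]}}$ of $\phi$-modules into an isomorphism of $\mu$-graded pieces: after choosing compatible $\kubr$-lattices, this should recover a $k[T^\ell, \sigma^\ell]$-module isomorphism between $k[T^\ell,\sigma^\ell]/k[T^\ell,\sigma^\ell]P_1$ and $k[T^\ell,\sigma^\ell]/k[T^\ell,\sigma^\ell]P_2$.

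For the \emph{if} direction, I would assemble the similarity from three elementary building blocks. First, for any $N \in \Z$, $P^{[\mu]} \sim P^{[\mu+N]}$: the identity $P^{[\mu]} u^N = u^N P^{[\mu+N]}$ together with the invertibility of $u^N$ shows that right multiplication by $u^N$ induces a left $K[T,\phi]$-module isomorphism between the two quotients. Second, $P^{[\mu]} \sim \sigma(P)^{[b\mu]}$: unfolding with $Tu^\nu = u^{b\nu}T$ and $TP = \sigma(P)T$ (for $P \in k[T^\ell, \sigma^\ell]$) gives $TP^{[\mu]} = \sigma(P)^{[b\mu]}T$, so $[X] \mapsto [XT]$ defines a nonzero left $K[T,\phi]$-module map $D_{\sigma(P)^{[b\mu]}} \to D_{P^{[\mu]}}$, which is an isomorphism because both modules are simple by irreducibility (Theorem \ref{thm:classif-irred}). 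Third, $P_1 \sim_{k[T^\ell,\sigma^\ell]} P_2$ implies $P_1^{[\mu]} \sim P_2^{[\mu]}$ in $K[T,\phi]$: the cyclic $k[T^\ell, \sigma^\ell]$-module isomorphism base-changes along $k[T^\ell, \sigma^\ell] \hookrightarrow K[T, \phi]$, and conjugation by $u^\mu$ becomes inner after passing to $K[u^\mu, u^{-\mu}]$, descending back to the desired similarity in $K[T,\phi]$.

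Putting these together, if $\mu_1 \sim \mu_2$ is witnessed by $\mu_1 = b^i \mu_2 + N$ with $0 \leq i \leq \ell$ and $N \in \Z$, then iterating the second building block $i$ times and then applying the first yields $P_2^{[\mu_2]} \sim \sigma^i(P_2)^{[\mu_1]}$, and the third building block connects $\sigma^i(P_2)^{[\mu_1]}$ to $P_1^{[\mu_1]}$ as soon as $P_1 \sim \sigma^i(P_2)$ in $k[T^\ell, \sigma^\ell]$. The main obstacle is precisely this Frobenius twist: the natural chain lands at $\sigma^i(P_2)$ rather than $P_2$, so closing the argument demands $\sigma^i(P_2) \sim_{k[T^\ell, \sigma^\ell]} P_2$. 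For $i$ a multiple of $\ell$ this follows from an internal analogue of the second building block inside $k[T^\ell, \sigma^\ell]$, via the identity $T^\ell P = \sigma^\ell(P) T^\ell$ and the $[X] \mapsto [XT^\ell]$ trick applied to simple $k[T^\ell, \sigma^\ell]$-modules; reducing the general case to this one, along with the dual step of genuinely descending the $\phi$-module isomorphism in the ``only if'' direction to a similarity in $k[T^\ell, \sigma^\ell]$ rather than merely in $K[T, \phi]$, is where I expect the technical core of the proof to lie.
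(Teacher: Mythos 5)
Your decomposition of the ``if'' direction into three building blocks is correct as far as it goes: the identities $P^{[\mu]}u^N = u^N P^{[\mu+N]}$ and $TP^{[\mu]} = \sigma(P)^{[b\mu]}T$ do hold, and each induces an isomorphism of the associated simple $\phi$-modules. But the proof is not complete, and the obstacle you flag at the end is not a removable technicality. Your chain terminates at $\sigma^i(P_2)^{[\mu_1]}$, so you need $\sigma^i(P_2)\sim_{k[T^\ell,\sigma^\ell]}P_2$ for the specific $0\le i\le \ell$ witnessing $\mu_1\sim\mu_2$, and this fails in general: $\sigma$ acts on $k[T^\ell,\sigma^\ell]$ as a ring automorphism that need not preserve similarity classes. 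Concretely, take $k=\F_{p^2}$, $\sigma$ the Frobenius, $b=2$, $\ell=2$, so that $k[T^2,\sigma^2]=k[T^2]$ is commutative and similarity there is equality up to a unit; for $P=T^2-a$ with $a\notin\F_p$ one has $\sigma(P)\not\sim P$, while your (correct) chain gives $P^{[2/3]}=u^2T^2-a\sim u^4T^2-a^p\sim uT^2-a^p=\sigma(P)^{[1/3]}$ with $2/3\sim 1/3$. So the missing step cannot be supplied as stated: either the proposition needs a Frobenius twist on $P_2$, or a different argument is required. The ``only if'' direction has the matching unresolved half (descending the $\phi$-module isomorphism to a similarity of the $\mu$-reductions in $k[T^\ell,\sigma^\ell]$), which you likewise leave as a declared expectation rather than an argument; note also that your derivation of slope-equivalence from the corollary on products is not immediate, since that corollary concerns products rather than similarity.

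For comparison, the paper's proof is far terser and handles the slope-equivalence differently: it extracts $\mu_1\sim\mu_2$ from valuations of determinants of companion matrices via $MC_1=C_2\phi(M)$, then asserts without justification that one may reduce to $\mu_1=\mu_2$, and disposes of the converse in one sentence (your third building block is essentially that sentence made slightly more precise). Your bookkeeping is more careful than the paper's and has surfaced a real issue that the paper's proof silently skips; but as a proof of the proposition as stated, your proposal has a genuine gap at exactly the point you identify, and that gap cannot be closed without modifying the statement or the strategy.
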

\begin{proof}
Let $P_1$ be a monoclinic monic skew polynomial, and let $C_1$ be its companion matrix. Then the slope of $P_1$ is $\frac{v(\det(C_1))}{b^{\deg P} - 1}$. Assume $P_2$ is similar to $P_1$, then there exists $M \in GL_d(K)$ such that $MC_1 = C_2 \phi(M)$, so that $\mu_1 + \det M = \mu_2 + b\det M$. Thus, $\mu_1 - \mu_2 \in (b-1)\Z$, so that $(b^d-1)\mu_1$ and $(b^d -1) \mu_2$ have the same digits when written in base $b$. This shows that $\mu_1 \sim \mu_2$. Thus, we may assume that $\mu_1 = \mu_2$, and in this case, the $\mu$-reductions of $P_1$ and $P_2$ are similar.\\
Conversely, if $\mu = \mu_1 \sim \mu_2$ and the $\mu$-reductions of $P_1$ and $P_2$ are similar, then an explicit isomorphism between the corresponding $\sigma$-modules over $k$ yields an isomorphism between the $\phi$-modules associated to $P_1$ and $P_2$.
\end{proof}

Note that using Lemma \ref{lem:rightfactor} gives a theoretical way to recursively obtain the list of the invariants describing the similarity classes of the irreducible factors of $P$: if $\mu$ is the smallest slope of the Newton polygon of $P$, then the $\mu$-reduction of $P$ can be factored in $k[T^\ell, \sigma^\ell]$, an irreducible right-factor of this $\mu$-reduction yields an irreducible right factor of $P$, and the data of $\mu$ and such a factor describe one class of irreducible polynomials in $K[T,\phi]$.

\bibliographystyle{plain}
\bibliography{biblio}

\end{document}